\theoremstyle{plain}  
\newtheorem{theorem}{Theorem}[section]
\numberwithin{equation}{section}
 \newtheorem*{Theorem A}{{\bf Theorem A}}
\newtheorem{proposition}{Proposition}[section]
\numberwithin{equation}{section}
\theoremstyle{remark}
\newtheorem{remark}{Remark}[section]
 \numberwithin{equation}{section}
\def\({\left( }
\def\){\right)}
\def\e{\eqref}
 \def\I{{\bf I}^{n+1}}
 \def\R{{\bf R}^{n+1}}
\begin{document}

\title[Isotropic geometry of production models] {Notes on isotropic geometry of production models}

\author[B.-Y. Chen]{Bang-Yen Chen}

\address{Chen: Department of Mathematics,	Michigan State University, East Lansing, Michigan 48824--1027, USA}
\email{bychen@math.msu.edu}

\author[S. Decu]{Simona Decu}

\address{Decu: Department of Applied Mathematics, The Bucharest University of Economic Studies, Romania}
\email{simona.decu@gmail.com}

\author[L. Verstraelen]{Leopold Verstraelen}

\address{Verstraelen: Section of Geometry, KU Leuven, Belgium}
\email{leopold.verstraelen@wis.kuleuven.be}

\begin{abstract} The production function is one of the key concepts of mainstream neoclassical theories in economics. The study of the shape and  properties of the production possibility frontier is a subject of great interest
in economic analysis. In this respect, Cobb-Douglas and CES production functions with flat graph hypersurfaces in Euclidean spaces are first studied in  \cite{V,VV}. Later, more general studies of production models were given in \cite{c0}-\cite{c6}  and \cite{CV,VV2}.
On the other hand,  from visual-physical experiences,  the second and third authors proposed in \cite{DV} to study production models via isotropic geometry. 
The purpose of this paper is thus to investigate important production models via isotropic geometry. 
\end{abstract}

\keywords{Isotropic geometry, homogeneous production function, Cobb-Douglas production function, CES production function,  perfect substitute, isotropic minimality, relative curvature.}

 \subjclass[2000]{Primary:  91B38; Secondary 65D17, 91B64, 53B25}

\maketitle

\section{Introduction.}

In economics, a production function is a non-constant positive function that specifies the output of a firm, an industry, or an entire economy for all combinations of inputs. Almost all economic theories presuppose a production function, either on the firm level or the aggregate level. In this sense, the production function is one of the key concepts of mainstream neoclassical theories.

Let $\mathbb E^{m}$ denote the Euclidean $m$-space, i.e., the Cartesian $m$-space ${\bf R}^{m}$ endowed with the Euclidean metric.
To visualize a production function $f:D\subset {\bf R}^n\to {\bf R}$ defined on a domain $D$ of ${\bf R}^n$,  we usually embed this $n$-space as $(x_1,\ldots,x_n)$-space into ${\mathbb E}^{n+1}$ and consider the corresponding graph hypersurface \begin{align}\label{1.1}\Phi(f):=\{(x_1,\ldots,x_n,f(x_1,\ldots,x_n))\in {\mathbb E}^{n+1}: (x_1,\ldots,x_n)\in D\}.\end{align}
 With respect to the metric induced from the Euclidean metric on $\mathbb E^{n+1}$, \e{1.1} defines an isometrically embedded hypersurface in $\mathbb E^{n+1}$.
 
It is well known that the study of the shape and the properties of the production possibility frontier is a subject of great interest in economic analysis.
G.\,E. V\^{\i}lcu  proved in \cite{V} that a Cobb-Douglas production function has constant return to scale if and only if the corresponding graph hypersurface in $\mathbb E^{n+1}$ is flat. Later, this result was extended to CES production function in \cite{VV}. Both Cobb-Douglas and CES production functions are homogeneous. More general geometric studies of production models were given in \cite{c0}-\cite{c6} and \cite{CV,VV2}.  

Minimality condition of production hypersurfaces was first studied by the first author in \cite{c1}. It is proved in \cite{c0} that a 2-input homogenous production function is a perfect substitute if and only if the production surface is minimal. Recently,  minimality condition of Cobb-Douglas and CES production functions was investigated by X. Wang and Y. Fu in \cite{WF}. 
 
Besides Euclidean geometry on ${\bf R}^m$, there is another important geometry  on ${\bf R}^{m}$, called isotropic geometry.  Isotropic geometry provides one of the 27 Cayley-Klein geometries on ${\bf R}^3$. It is the product of the Euclidean space and the isotropic line equipped with a degenerate parabolic distance metric.  The isotropic space ${\bf I}^{n+1}$ is derived from ${\mathbb E}^{n+1}$ by substituting isotropic distance for usual Euclidean distance. 

Given $p,q\in {\bf R}^{n+1}$ the isotropic distance $d(p,q)$ is either the Euclidean distance of the orthographic projections onto ${\bf R}^n\times \{0\}$ if the projections are distinct, or otherwise simply the Euclidean distance. 
In several applied sciences, e.g., computer science and vision, it is natural to view the graph hypersurface $\Phi(f)$ in \e{1.1} as a hypersurface in $\Phi(f)\times{\bf R}$ instead of $\mathbb E^{n+1}$, which  treat $\Phi(f)$ as a subset of the isotropic space ${\bf I}^{n+1}$ (see  \cite{KV,PO,S}).
For an $n$-input production function $f$, the metric on $\Phi(f)$ induced from  $\I$ is given by  $g_{*}=dx_1^2+\cdots +dx_n^2$. Thus $(\Phi(f),g_*)$ is always a flat space. So, its Laplacian  is given by \begin{align}\label{1.2}\Delta=\sum_{j=1}^n \frac{\partial^2}{\partial^2 x_j}.\end{align}

Recently,   it was suggested by the second and third authors in \cite{DV} to study production models via isotropic geometry. 
The purpose of this paper is thus to investigate production models from the viewpoint of isotropic geometry. Several classification results in this respect are obtained.

\section{A brief review of isotropic geometry}

For later use, we provide a brief review of isotropic geometry from \cite{PO} (see also \cite{DV,KV}).  

Let $f:D\subset {\bf R}^n\to {\bf R}$ be a function defined on a domain $D$ of ${\bf R}^n$.
Consider the graph hypersurface $\Phi(f)$ defined by
\begin{align}\label{2.1}\{(x_1,\ldots,x_n,f(x_1,\ldots,x_n))\in {\bf R}^{n+1}: (x_1,\ldots,x_n)\in D\}.\end{align}
In this paper we use the following terminology.  Lines in the $x_{n+1}$-direction are called {\it isotropic lines}. $k$-planes containing an isotropic line are called {\it isotropic $k$-planes}. 
The projections in the isotropic $x_{n+1}$-direction onto ${\mathbb E}^n$ are called {\it top views}. Via this projection, the top views of isotropic lines and $k$-planes are points and $(k-1)$-planes, respectively.

In isotropic geometry, it is convenient to represent a point $p\in \I$ with the coordinate vector
$X=(x_1,\ldots,x_{n+1})\in \R$ by its top view $x=(x_1,\ldots, x_n)$ and the last coordinate
$x_{n+1}$. 
We will use $i=(0,\ldots,0,1)\in \R$  for the isotropic direction and we write
$$X=x+x_{n+1}i$$ with the understanding that in this combination $x=(x_1,\ldots, x_n,0)$.

A curve $X(s)\subset \I$ without isotropic tangents can be represented by 
\begin{align}\label{2.2}X(s)=x(s)+x_{n+1}(s)i\end{align}  with $s$ as {\it  isotropic arclength}, which is the Euclidean arclength of its top view curve $x(s)$.
The derivative vectors $X'(s)$ and $x'(s)$  satisfy $||X'(s)||_i =||x'(x)||=1$,  where $||\cdot ||_i$ denotes the isotropic norm. Second derivative with respect to $s$ yields the curvature
vector $X''(s)=x''(s)+x''_{n+1}(s)i$. Thus we have
\begin{align}\label{2.3}\kappa(s):=||X''(s)||_i =||x''(s)||,\end{align}
which is the  curvature of the isotropic curve at $X(s)$. In fact, $\kappa(s)$ is nothing but the Euclidean curvature
of the top view $x(s)$. 
\vskip.05in 

\noindent {\sc Case} (1): $\kappa\ne 0$.  The principal normal vector is defined as $E_2:={X''(s)}/{||X''(s)||_i}$ in this case, which is isotropically orthogonal to $E_1:=X'(s)$ and satisfies $E'_1=\kappa E_2$.
\vskip.05in 

\noindent {\sc Case} (2): {\it For $s_o$ with $\kappa(s_o)=0$}. We define the {\it $s$-curvature} as $\kappa_s(s_o):= x''_{n+1}(s_0)$.
\vskip.1in

The curvature theory of hypersurfaces in $\I$  can be found in \cite{PO} which is analogous to the Euclidean counterpart. For a function $f:D\subset {\bf R}^n\to {\bf R}$,
consider the graph hypersurface $\Phi(f)=\{x+f(x)i: x\in D\}$.  Let $X(s) = x(s) + f(x(s) )i$ be a curve on $\Phi(f)$ which is parameterized by an isotropic arclength s. For its tangent vectors $E_1(s)=T(s)$,  we have
\begin{align}\label{2.4} T(s) = X'(s) =x'(s) + \left<x'(s),\nabla f(x(s))\right>i, \end{align}
where $\left<\,\cdot \,,\cdot\,\right>$ denotes the scalar product on $\mathbb E^n$ and $\nabla f$ is the gradient of $f$. 
In the following, we suppress the argument $s$ whenever there are no confusion.

The curvature vector is 
\begin{align}\label{2.5}X'' = x'' + \left<x'',\nabla f(x)\right>i + ( x'^T\cdot (D^2 f(x)) x')i, \end{align}
where $D^2 f$ is the Hessian of $f$. The first two terms in \e{2.5} form a vector $\tilde S$ in the tangent hyperplane $T(s_o)$ of $\Phi(f)$ at $X(x_o)$. The isotropic length of this vector is
considered as the geodesic curvature $\kappa_g$  of the curve at $X(s_o)$. Because of $||\tilde S||_i=||X''||_i$, $\kappa_g$
 is the same as the isotropic curvature $\kappa$  of the curve $X(s)$ at $X(s_o)$ and the Euclidean curvature of its top view. 
 
 For $\kappa_g\ne 0$, we normalize $\tilde S$ to the side vector $S=\tilde S/\kappa_g$, and for
  $\kappa_g=0$, we set  $S=i$. The third term in \e{2.5} is the ``normal'', i.e., isotropic component
of $X''$. Its $s$-length is called the {\it normal curvature} $\kappa_n$. Denoting the top view
$x'(s)$ of the tangent vector $T$ by $t$. Consequently, one has
  \begin{equation}\begin{aligned} \label{2.6} &X''=\kappa_g S+\kappa_n i,\\ & \kappa_g=\kappa =||X''||_i =||x''||,\;\; \kappa_n=t^T\cdot (D^2 f(x)) t.\end{aligned}\end{equation}

It is well known  that the extremal values of the normal curvatures at point $p\in \Phi(f)$ are the eigenvalues of the Hessian $D^2 f(p)$. The corresponding directions are the associated normalized eigenvectors. Since the Hessian is symmetric, all principal curvatures $\kappa_1,\ldots,\kappa_n$ are real and there is an orthonormal basis of main directions $t_1,\ldots,t_n$ in $\mathbb E^n$. From $t_1,\ldots,t_n$ we have the principal curvature directions of $\Phi(f)\subset \I$  given by $T_j=t_j+\left<t_j,\nabla f(p)\right>i,\, j=1,\ldots,n$.

Without solving the characteristic equation $\det( D^2 f  -\lambda E) = 0$, one can read off the
elementary symmetric functions of the principal curvatures from the coefficients of the
characteristic polynomial. This leads to $n$ fundamental
curvatures $K_1,\ldots,K_n$. If we denote by $H^{i_1,\ldots ,i_s}$ the
determinant of the quadratic submatrix of $D^2f(p)$ obtained by taking in $D^2f$ only rows
and columns with indices $i_1,\ldots,i_s$, then we have
 \begin{equation}\begin{aligned} \label{2.7} K_j &=\frac{1}{\binom{n}{j}}\big(\kappa_1\cdots \kappa_j+\kappa_1\cdots \kappa_{j-1}\kappa_{j+1}+\cdots +\kappa_{n-j+1}\cdots \kappa_n\big),
 \\ & =\frac{1}{\binom{n}{j}}\big(H^{1,\ldots, j}+H^{1,\ldots, j-1,j+1}+\cdots +H^{n-j+1,\ldots, n}\big).\end{aligned}\end{equation}
In particular, one has the {\it isotropic mean curvature}
\begin{align}\label{2.8} K_1(p)=\frac{1}{n}{\rm trace}\,(D^2f(p))=\frac{1}{n}\Delta f(p),\end{align}
and the analogue of the Gaussian-Kronecker curvature, the  {\it relative curvature}
\begin{align}\label{2.9} K_n(p)=\det\,(D^2f(p)).\end{align}

\begin{remark} It follows from \e{2.8} that the graph hypersurface $\Phi(f)$ of production function $f$ is isotropic minimal in $\I$ if and only if $f$ is harmonic, i.e., $\Delta f=0$.
\end{remark}

\section{Some important production functions in economics}

There are two special classes of production functions that are often analyzed in microeconomics and macroeconomics; namely, homogeneous and homothetic production functions. A production function $ f(x_{1},\cdots,x_{n})$ is said to be  {\it homogeneous of degree} $d$ or {\it $d$-homogeneous}, if  
  \begin{align}\label{3.1}f(tx_{1},\ldots,tx_{n}) = t^{d}f(x_{1},\ldots,x_{n})\end{align}
holds for each $t\in \mathbb R$ for which \eqref{3.1} is defined.   A homogeneous function of degree one is called  {\it linearly homogeneous}.

If $d>1$, the homogeneous function exhibits increasing returns to scale, and it exhibits decreasing returns to scale if $d<1$. If it is homogeneous of degree one, it exhibits constant returns to scale.  Constant returns to scale is the in-between case. 

A {\it homothetic  function} is a production function of the form:  
\begin{align}\label{3.2} f=F(h(x_1,\ldots,x_n)),\end{align}
where $h(x_1,\ldots,x_n)$ is a homogeneous function of any given degree and $F$ is a monotonically increasing function.

In economics, an {\it isoquant} is a contour line drawn through the set of points at which the same quantity of output is produced while changing the quantities of two or more inputs. 
Homothetic functions are functions whose marginal technical rate of substitution (the slope of the isoquant) is homogeneous of degree zero. 

The $n$-input Cobb-Douglas  production function can be expressed as
\begin{align}\label{3.3} f =\gamma x_{1}^{\alpha_{1}}\cdots x_{n}^{\alpha_{n}},\end{align}
where $\gamma$ is a positive constant and $\alpha_{1},\ldots,\alpha_{n}$ are nonzero constants. 
The Cobb-Douglas production function is especially notable for being the first time an aggregate or
economy-wide production function was developed, estimated, and then presented to the profession for analysis. It gave a landmark change in how economists approached macroeconomics. 

The $n$-input CES  production functions (or ACMS production function \cite{ACMS}) are given by
\begin{align}\label{3.4} f=\gamma \left(\sum_{i=1}^{n}a_{i}^{\rho}x_{i}^{\rho}\right)^{\! \frac{d}{\rho}},\end{align}
where $a_{i},d,\gamma, \rho$ are nonzero constants. The CES production functions are of great interest in economy because of their invariant characteristic; namely, the elasticity of substitution between the parameters is constant on their domains. 
Obvious, both Cobb-Douglas and CES production functions are homogeneous.

In economics, goods that are completely substitutable with each other are called perfect substitutes. They may be characterized as goods having a constant marginal rate of substitution. Mathematically, a production function is  a perfect substitute if it is of the form
\begin{align}\label{3.5} f(x_{1},\ldots,x_{n})= \sum_{i=1}^{n}a_{i}x_{i}\end{align}for some nonzero constants $a_{1},\ldots,a_{n}$.

\section {Minimality of production models.}

For a production function $f: D\subset {\bf R}^n\to {\bf R}$, the study of minimality of the
graph hypersurface 
  \begin{align}\label{4.1}\Phi(f):=\{(x_1,\ldots,x_n,f(x_1,\ldots,x_n))\in {\mathbb E}^{n+1}: (x_1,\ldots,x_n)\in D\}\end{align}
in $\mathbb E^{n+1}$ with respect to the Euclidean metric on ${\mathbb E}^{n+1}$ was initiated by the first author  in \cite{c0} and later in \cite{WF} by Wang and Fu. The following results were proved in \cite{c0,WF}.

 \begin{theorem} \label{T:4.1} \cite{c1} A 2-input homogeneous production function is a perfect substitute  if and only if the production surface is a minimal surface in $\mathbb E^3$.
 \end{theorem}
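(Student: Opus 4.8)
The plan is to work with the Euclidean mean-curvature condition of the graph. For $z=f(x,y)$ in $\mathbb E^3$, minimality $H=0$ is the standard quasilinear equation
\begin{align}\label{mseq}
(1+f_y^2)f_{xx}-2f_xf_yf_{xy}+(1+f_x^2)f_{yy}=0.
\end{align}
The ``if'' direction is immediate: a perfect substitute $f=a_1x+a_2y$ has vanishing Hessian, so \eqref{mseq} holds trivially and $\Phi(f)$ is an affine plane, which is minimal. The whole content lies in the converse, and the plan is to feed the homogeneity of $f$ into \eqref{mseq} and extract linearity.

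First I would use homogeneity to cut down the variables: on the natural domain $x>0$ write $f(x,y)=x^{d}g(t)$ with $t=y/x$, where $d$ is the degree. A direct computation expresses $f_x,f_y$ as $x^{d-1}$ times functions of $g,g'$, and $f_{xx},f_{xy},f_{yy}$ as $x^{d-2}$ times functions of $g,g',g''$. Substituting into \eqref{mseq} and collecting powers of $x$ puts the equation in the form
\begin{align}\label{split}
x^{d-2}A(t)+x^{3d-4}B(t)=0,
\end{align}
where $A(t)$ comes from the flat part $f_{xx}+f_{yy}$ (so $x^{2-d}A=\Delta f\cdot x^{2-d}$) and $B(t)$ from the cubic part $f_y^2f_{xx}-2f_xf_yf_{xy}+f_x^2f_{yy}$. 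The crucial observation is that along each ray $t=\mathrm{const}$ the variable $x$ still runs freely, so \eqref{split} must hold for all $x>0$ with $t$ fixed.

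The argument then splits according to whether the exponents $d-2$ and $3d-4$ coincide. When $d=1$ they agree, all of $f_{xx},f_{xy},f_{yy}$ are proportional to $g''$, and \eqref{mseq} collapses after factoring to $(1+t^2+g^2)\,g''=0$. Since the first factor is strictly positive, $g''=0$, hence $g$ is affine and $f=x(\alpha+\beta t)=\alpha x+\beta y$ is a perfect substitute, as desired.

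When $d\neq1$ the powers $x^{d-2}$ and $x^{3d-4}$ are linearly independent functions of $x$, so \eqref{split} forces $A(t)\equiv0$ and $B(t)\equiv0$ \emph{separately}. The condition $A\equiv0$ is precisely $\Delta f=0$, so $f$ is harmonic and homogeneous of degree $d$, hence $f=r^{d}(c_1\cos d\theta+c_2\sin d\theta)$ in polar coordinates. The remaining, genuinely delicate, step—and the main obstacle—is to show that imposing $B\equiv0$ on such an $f$ forces $c_1=c_2=0$. I would handle this by passing to the holomorphic $F=cz^{d}$ with $f=\operatorname{Re}F$: then $f_x-if_y=F'$ and $f_{xx}+if_{xy}=\overline{F''}$, and after using $f_{yy}=-f_{xx}$ the cubic form $B$ becomes a nonzero multiple of $\operatorname{Re}\!\big[(F')^2\overline{F''}\big]$, which computes to $r^{3d-4}$ times a nonvanishing trigonometric polynomial in $d\theta$ whenever $c\neq0$ and $d\neq0,1$. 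Thus $B\equiv0$ is incompatible with $c\neq0$, forcing $f\equiv0$ (the degenerate value $d=0$ likewise yields only constants); since a production function is non-constant, the case $d\neq1$ cannot occur. Hence minimality implies $d=1$ and $f=\alpha x+\beta y$, completing the converse.
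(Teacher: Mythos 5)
The paper itself does not prove this statement (it is quoted from \cite{c1}), so your argument has to stand on its own. Its overall architecture is the standard one — reduce by homogeneity to a function of $t=y/x$, split the minimal surface equation into the two powers $x^{d-2}$ and $x^{3d-4}$, and treat $d=1$ and $d\neq 1$ separately — and most of it checks out: the ``if'' direction is fine, the $d=1$ computation indeed collapses to $(1+t^2+g^2)g''=0$, and your holomorphic treatment of the cubic form is correct, since with $f_{yy}=-f_{xx}$ one has $f_y^2f_{xx}-2f_xf_yf_{xy}+f_x^2f_{yy}=-\mathrm{Re}\big[(F')^2\overline{F''}\big]$, and for $F=cz^d$ this equals $-|c|^2d^3(d-1)\,r^{3d-4}\,(c_1\cos d\theta+c_2\sin d\theta)$, which vanishes identically only when $c=0$ (given $d\neq 0,1$).

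The genuine gap is the parenthetical dismissal of $d=0$: the claim that this case ``yields only constants'' is false. A harmonic function that is homogeneous of degree $0$ has the form $f=a+b\arctan(y/x)$ (write $f=\Phi(\theta)$; harmonicity gives $\Phi''=0$), and for such $f$ the cubic part $B$ also vanishes identically — indeed with $F=a-ib\log z$ one gets $(F')^2\overline{F''}=ib^3/r^4$, which is purely imaginary. Geometrically this is no accident: the graph of $\arctan(y/x)$ is the helicoid, a classical non-planar minimal surface. So for $d=0$ minimality does \emph{not} force linearity, and your proof breaks at exactly that point. Moreover this is not a repairable slip within the stated hypotheses of this paper: on the domain $x,y>0$ the function $\arctan(y/x)$ is non-constant and positive, hence a production function in the sense of Section 1, homogeneous of degree $0$, with minimal graph, yet not a perfect substitute. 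The equivalence therefore holds only if $d=0$ is excluded — for instance by the standard economic requirement of positive marginal products (note $f_x<0$ for the helicoid), or by assuming nonzero returns to scale, as is tacit in the cited source. You need either to add such a hypothesis explicitly and use it to kill the case $d=0$, or to flag that the statement fails for degree-zero homogeneous functions; as written, the case analysis is incomplete precisely where the theorem is delicate.
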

  \begin{theorem} \label{T:4.2} \cite{WF} There does not exist a minimal Cobb-Douglas production hypersurface in $\mathbb E^{n+1}$.
 \end{theorem}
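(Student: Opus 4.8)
The plan is to work with the Euclidean minimal surface equation for a graph and reduce it to a single algebraic identity that the exponents cannot satisfy. Writing $f_i=\partial f/\partial x_i$ and $f_{ij}=\partial^2 f/\partial x_i\partial x_j$, the graph $\Phi(f)$ is minimal in $\mathbb E^{n+1}$ precisely when $\operatorname{div}\bigl(\nabla f/\sqrt{1+|\nabla f|^2}\bigr)=0$, i.e.
\[(1+|\nabla f|^2)\,\Delta f=\sum_{i,j}f_i f_j f_{ij}.\]
First I would record the derivatives of the Cobb-Douglas function $f=\gamma x_1^{\alpha_1}\cdots x_n^{\alpha_n}$ on the positive orthant. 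A direct computation gives $f_i=(\alpha_i/x_i)f$ and $f_{ij}=\bigl(\alpha_i\alpha_j/(x_ix_j)-\delta_{ij}\alpha_i/x_i^2\bigr)f$, so that $\Delta f=f\sum_i\alpha_i(\alpha_i-1)/x_i^2$, $|\nabla f|^2=f^2\sum_i\alpha_i^2/x_i^2$, and $\sum_{i,j}f_if_jf_{ij}=f^3\Bigl[\bigl(\sum_i\alpha_i^2/x_i^2\bigr)^2-\sum_i\alpha_i^3/x_i^4\Bigr]$.

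Substituting these into the minimal surface equation and dividing by the positive factor $f$, the two quartic terms $f^2\bigl(\sum_i\alpha_i^2/x_i^2\bigr)^2$ cancel and the equation collapses to the identity
\[\sum_{i=1}^n\frac{\alpha_i(\alpha_i-1)}{x_i^2}=\gamma^2\Bigl(\prod_{k=1}^n x_k^{2\alpha_k}\Bigr)\sum_{i\neq j}\frac{\alpha_i^2\alpha_j}{x_i^2x_j^2},\]
which, henceforth called $(\star)$, must hold throughout the domain $(0,\infty)^n$ since $f$ is real-analytic. The second main step is to read $(\star)$ as an equality between two finite real-linear combinations of monomials $x^\beta=x_1^{\beta_1}\cdots x_n^{\beta_n}$, which are linearly independent as functions on the positive orthant as soon as their exponent vectors $\beta$ are distinct. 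Grouping the ordered pairs $(i,j)$ and $(j,i)$ on the right, its support becomes $\gamma^2\sum_{\{i,j\}}\alpha_i\alpha_j(\alpha_i+\alpha_j)\,x_i^{2\alpha_i-2}x_j^{2\alpha_j-2}\prod_{k\neq i,j}x_k^{2\alpha_k}$.

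The heart of the argument is then a comparison of monomial supports. The left-hand side of $(\star)$ lives on the pure monomials $x_l^{-2}$, whereas each right-hand monomial drags along the full product $\prod_{k\neq i,j}x_k^{2\alpha_k}$; consequently no right-hand exponent vector can equal any $-2e_l$ — matching at a third index would force some $\alpha_k=0$, while matching at $i$ or $j$ would force the corresponding exponent to be negative. Hence the two sides have disjoint supports, and as the right-hand coefficients $\gamma^2\alpha_i\alpha_j(\alpha_i+\alpha_j)$ cannot cancel against anything, linear independence makes $(\star)$ impossible. I expect the positivity of the exponents to be the delicate point: it is exactly what rules out the borderline configurations — most visibly for $n=3$, where a negative exponent (as in $f=x_2x_3/x_1$, which one checks does satisfy $(\star)$ and hence is a genuine minimal graph in $\mathbb E^4$) would defeat the statement. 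Thus the proof must use that the Cobb-Douglas elasticities are positive, not merely nonzero; granting this, the support comparison is uniform in $n$, and the only routine labor is the cancellation leading to $(\star)$ together with the short check that the two supports are disjoint.
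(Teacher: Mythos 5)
There is nothing in the paper to compare against here: Theorem \ref{T:4.2} is stated as a citation of \cite{WF}, and no proof is reproduced in the text. So your argument has to stand on its own, and it essentially does. I checked the reduction: the non-parametric minimal surface equation $(1+|\nabla f|^2)\Delta f=\sum_{i,j}f_if_jf_{ij}$, your formulas for $f_i$, $f_{ij}$, $\Delta f$, $|\nabla f|^2$ and $\sum_{i,j}f_if_jf_{ij}$, the cancellation of the quartic terms, and the identity $(\star)$ are all correct, and the monomial-support comparison does make $(\star)$ impossible when all $\alpha_i>0$ and $n\ge 2$ (for $n=1$ the statement is vacuously false, since $f=\gamma x$ is a minimal line; the theorem implicitly assumes at least two inputs). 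This is very likely the same direct-computation route as in \cite{WF}, but that cannot be confirmed from this document.

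Two remarks on your write-up. First, your case labels in the disjointness step are transposed: matching the exponent vector $-2e_l$ at $l=i$ or $l=j$ forces $\alpha_i=0$ (ruled out by the exponents being nonzero), whereas matching at a third index $l\notin\{i,j\}$ forces $\alpha_l=-1$ (this is exactly where positivity enters), together with $\alpha_k=0$ at any remaining indices when $n\ge4$. The conclusion is unaffected, and under positivity one should also note, as you implicitly do, that coincident right-hand monomials carry coefficients $\gamma^2\alpha_i\alpha_j(\alpha_i+\alpha_j)$ of the same sign, so no cancellation within the right-hand side can occur either. Second, your observation about positivity is not a side comment but a genuine discrepancy with the paper as written: definition \eqref{3.3} of this paper requires the $\alpha_i$ only to be \emph{nonzero}, and under that definition the theorem is false for $n\ge3$. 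Your example $f=x_2x_3/x_1$ does satisfy the minimal graph equation in $\mathbb E^4$ (both sides equal $2x_2x_3/x_1^3+2x_2^3x_3^3/x_1^7+2x_2x_3^3/x_1^5+2x_2^3x_3/x_1^5$), so the theorem must be read with the standard economic normalization $\alpha_i>0$ adopted in \cite{WF}. Flagging that hypothesis, which the paper's own definition omits, is the most valuable part of your proposal.
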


 \begin{theorem} \label{T:4.3} \cite{WF} An $n$-input CES production hypersurface
in $\mathbb E^{n+1}$ is minimal if and only if the production function is a perfect substitute.
 \end{theorem}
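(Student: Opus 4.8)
The statement concerns \emph{Euclidean} minimality of the graph $\Phi(f)\subset\mathbb E^{n+1}$, so the governing condition is the classical minimal hypersurface equation for a graph $x_{n+1}=f(x)$, namely
\begin{align}\label{plan1}(1+|\nabla f|^2)\,\Delta f-(\nabla f)^{T}(D^2f)(\nabla f)=0,\end{align}
where $\Delta f=\sum_j f_{jj}$ and $D^2f=(f_{ij})$ is the Hessian. The plan is to substitute the CES function \eqref{3.4} into \eqref{plan1}, simplify the resulting scalar identity, and read off exactly when it can hold identically on the natural (economic) domain $x_1,\dots,x_n>0$ with positive coefficients $a_i$.

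First I would set $u=\sum_{i=1}^n a_i^{\rho}x_i^{\rho}$, so that $f=\gamma u^{d/\rho}$, and record the first two derivatives. Writing $q_i=a_i^{\rho}x_i^{\rho-1}$, a direct computation gives $f_i=d\,f\,u^{-1}q_i$ and
\begin{align}\label{plan2}f_{ij}=d(d-\rho)\,f\,u^{-2}q_iq_j+d(\rho-1)\,f\,u^{-1}\frac{q_i}{x_i}\,\delta_{ij}.\end{align}
Introducing $A=\sum_i q_i^2$, $B=\sum_i q_i/x_i$ and $C=\sum_i q_i^3/x_i$, one finds $|\nabla f|^2=d^2f^2u^{-2}A$, $\Delta f=d(d-\rho)fu^{-2}A+d(\rho-1)fu^{-1}B$, and $(\nabla f)^{T}(D^2f)(\nabla f)=d^3(d-\rho)f^3u^{-4}A^2+d^3(\rho-1)f^3u^{-3}C$. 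The crucial simplification is that the two terms proportional to $A^2$ cancel between $|\nabla f|^2\Delta f$ and $(\nabla f)^{T}(D^2f)(\nabla f)$. After dividing by the nowhere-vanishing factor $d\,f\,u^{-2}$ and using $f^2=\gamma^2u^{2d/\rho}$, equation \eqref{plan1} collapses to
\begin{align}\label{plan3}(d-1)A+(\rho-1)\Big[(uB-A)+d^2\gamma^2u^{2d/\rho-1}(AB-C)\Big]=0.\end{align}

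The decisive ingredient is a positivity fact. Expanding the products, the diagonal parts of $uB$ and of $AB$ are exactly $A$ and $C$, so that $uB-A=\sum_{i\neq j}a_i^{\rho}a_j^{\rho}x_i^{\rho}x_j^{\rho-2}$ and $AB-C=\sum_{i\neq j}a_i^{2\rho}a_j^{\rho}x_i^{2\rho-2}x_j^{\rho-2}$ are sums of strictly positive monomials whenever $n\ge 2$; likewise $A>0$. For the \emph{if} direction, a perfect substitute is precisely the CES function with $d=\rho=1$ (its graph is a hyperplane), and then both coefficients $(d-1)$ and $(\rho-1)$ in \eqref{plan3} vanish, so $\Phi(f)$ is minimal. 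For the \emph{only if} direction, suppose \eqref{plan3} holds for all $x>0$. Restricting to a ray $x\mapsto tx$ separates \eqref{plan3} into homogeneous parts of degrees $2\rho-2$ and $2d+2\rho-4$. If $d\neq 1$ these exponents differ, so each part must vanish separately; the degree-$(2d+2\rho-4)$ part forces $(\rho-1)\,d^2\gamma^2u^{2d/\rho-1}(AB-C)=0$, and positivity of $AB-C$ gives $\rho=1$, after which the remaining part reduces to $(d-1)A=0$ with $A>0$, i.e. $d=1$, a contradiction. Hence $d=1$; then \eqref{plan3} reduces to $(\rho-1)\big[(uB-A)+\gamma^2u^{2/\rho-1}(AB-C)\big]=0$ with the bracket strictly positive, forcing $\rho=1$. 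Thus minimality implies $d=\rho=1$, i.e. $f$ is a perfect substitute.

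I expect the main obstacle to be organizing the Hessian substitution \eqref{plan2} so that the terms proportional to $A^2$ visibly cancel; this cancellation is what turns an apparently unwieldy rational identity into the transparent form \eqref{plan3}. The remaining work---the positivity of $uB-A$ and $AB-C$ together with the degree separation---is then elementary, although one should keep an eye on degenerate exponent values (for instance $\rho=2$, where $x_j^{\rho-2}=1$ collapses the off-diagonal monomials). The sign argument used above avoids any delicate monomial linear-independence discussion and disposes of these cases uniformly.
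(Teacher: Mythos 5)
Your proposal is correct, but it is worth noting that the paper itself offers no proof of this statement at all: Theorem \ref{T:4.3} is quoted from Wang and Fu \cite{WF}, and the only computation the paper carries out for CES functions is the proof of Proposition \ref{P:4.3}, the \emph{isotropic} analogue, where minimality reduces to the linear condition $\Delta f=0$ and the argument ends with the Laplacian formula \eqref{4.7}. Your argument is a genuine, self-contained proof of the Euclidean statement: you work with the full nonlinear minimal surface equation, and the decisive structural point---that the terms proportional to $A^2$ cancel between $(1+|\nabla f|^2)\Delta f$ and $(\nabla f)^T(D^2f)(\nabla f)$---is exactly what reduces the problem to a tractable identity of the form $(d-1)A+(\rho-1)\bigl[(uB-A)+d^2\gamma^2u^{2d/\rho-1}(AB-C)\bigr]=0$. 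Note that the low-degree part of this identity, $(d-1)A+(\rho-1)(uB-A)$, is (up to the common positive factor you divided out) precisely the paper's Laplacian \eqref{4.7}, so your computation subsumes the paper's isotropic one; the extra term weighted by $u^{2d/\rho-1}$ is the price of Euclidean geometry, and your combination of degree separation along rays (valid since the exponents $2\rho-2$ and $2d+2\rho-4$ differ exactly when $d\ne 1$) with strict positivity of $A$, $uB-A$, $AB-C$ disposes of it cleanly, including the degenerate exponent $\rho=2$ that a monomial-independence argument would have to treat separately. Two small caveats: the positivity of $uB-A$ and $AB-C$ uses $n\ge 2$ and $a_i>0$ (the latter is implicit in \eqref{3.4} anyway, since $a_i^\rho$ must be defined for real $\rho$); for $n=1$ your reduced equation collapses to $(d-1)A=0$, which still yields $d=1$ and hence a perfect substitute, so the theorem survives, but you should say so explicitly rather than leave that case silently outside the positivity argument.
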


Next, we discuss the minimality condition of production models in the isotropic space $\I$, instead of ${\mathbb E}^{n+1}$.

For 2-input functions, we give the following simple geometric characterization of perfect substitute via isotropic geometry.

\begin{proposition} \label{P:4.1} Let $f$ be a 2-input linearly homogeneous production function. Then $f$ is a perfect substitute  if and only if the production surface is isotropic minimal in the isotropic 3-space ${\bf I}^3$.\end{proposition}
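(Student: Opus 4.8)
The plan is to reduce isotropic minimality to the harmonicity condition $\Delta f = 0$ furnished by the Remark following \eqref{2.8}, and then to use linear homogeneity to convert this two-dimensional Laplace equation into an elementary ordinary differential equation. The easy direction is immediate: if $f = a_1 x_1 + a_2 x_2$ is a perfect substitute, then its Hessian $D^2 f$ vanishes identically, so $\Delta f = \mathrm{trace}\,(D^2 f) = 0$; by \eqref{2.8} and the Remark, $\Phi(f)$ is isotropic minimal in ${\bf I}^3$.

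For the converse, assume $\Phi(f)$ is isotropic minimal, so that $\Delta f = f_{x_1 x_1} + f_{x_2 x_2} = 0$ on $D$, and that $f$ is homogeneous of degree one. The key step is to pass to polar coordinates $x_1 = r\cos\theta$, $x_2 = r\sin\theta$. Because $f$ is $1$-homogeneous, it must take the form $f = r\,g(\theta)$ for a single function $g$ of the angular variable. Substituting into the polar expression for the Laplacian,
\begin{equation*}
\Delta f = f_{rr} + \tfrac{1}{r}\,f_r + \tfrac{1}{r^2}\,f_{\theta\theta},
\end{equation*}
and using $f_{rr} = 0$, $f_r = g(\theta)$, $f_{\theta\theta} = r\,g''(\theta)$, reduces the minimality condition to
\begin{equation*}
\Delta f = \tfrac{1}{r}\bigl(g(\theta) + g''(\theta)\bigr) = 0,
\end{equation*}
that is, to the linear ODE $g'' + g = 0$.

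Solving this ODE gives $g(\theta) = A\cos\theta + B\sin\theta$ for constants $A, B$, whence
\begin{equation*}
f = r\,g(\theta) = A\,(r\cos\theta) + B\,(r\sin\theta) = A x_1 + B x_2,
\end{equation*}
which is precisely the form \eqref{3.5} of a perfect substitute (genuine two-input dependence forcing $A, B \neq 0$). I would note in passing that degree one is exactly what produces the resonant equation $g'' + g = 0$: for a general degree $d$ the same computation yields $g'' + d^2 g = 0$, with solutions $\cos(d\theta), \sin(d\theta)$ that are not linear in $(x_1,x_2)$, which explains why the hypothesis of linear homogeneity is essential rather than incidental.

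The only mild obstacle I anticipate is bookkeeping the homogeneity reduction $f = r\,g(\theta)$ cleanly and confirming the polar substitution is valid on the production domain $D$ (typically the open positive quadrant, where $r > 0$ and $\theta \in (0,\pi/2)$, so the $1/r$ and $1/r^2$ factors introduce no singularity); the ODE step itself is entirely routine.
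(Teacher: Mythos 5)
Your proof is correct, but it takes a genuinely different route from the paper. The paper never leaves Cartesian coordinates: it invokes the Euler Homogeneous Function Theorem, $x_1 f_{x_1}+x_2 f_{x_2}=f$, differentiates this identity in $x_1$ and $x_2$ to get $x_1 f_{x_1x_1}+x_2f_{x_1x_2}=0$ and $x_1f_{x_1x_2}+x_2f_{x_2x_2}=0$, and hence expresses the Laplacian as $\Delta f=-f_{x_1x_2}\bigl(\tfrac{x_1^2+x_2^2}{x_1x_2}\bigr)$; minimality then forces $f_{x_1x_2}=0$, so $f=p(x_1)+q(x_2)$, and linear homogeneity forces $f$ to be linear. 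Your approach instead uses the $1$-homogeneity to write $f=r\,g(\theta)$ in polar coordinates and reduces harmonicity to the ODE $g''+g=0$, which you solve explicitly. Both arguments are sound (and both, like the paper, gloss over the cosmetic point that a perfect substitute requires \emph{both} coefficients nonzero). What the paper's method buys is a slightly stronger intermediate conclusion --- the identities above show the entire Hessian vanishes, with no change of coordinates and no ODE to solve. What your method buys is a sharper explanation of the hypothesis: the observation that degree $d$ yields $g''+d^2g=0$, with solutions $r^d\cos(d\theta)$, $r^d\sin(d\theta)$, not only shows why linearity of the homogeneity degree is essential but also reproduces exactly the counterexamples $f=x_1x_2$ and $k=x^2-y^2$ that the paper records in the Remark immediately after the proposition; in that sense your argument unifies the proposition and its accompanying remark, which the paper treats as separate observations.
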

\begin{proof} Assume that $f(x_1,x_2)$ be a linearly homogeneous production function. Then it follows from the Euler Homogeneous Function Theorem that $f$ satisfies
\begin{align} \label{4.2} x_1 f_{x_1}+x_2 f_{x_2}=f,\end{align}
where $f_{x_j}$ denotes the partial derivative of $f$ with respect to $x_j$. By taking the partial derivatives of \e{4.2} with respect to $x_1,x_2$, respectively, we find
\begin{equation}\begin{aligned}\label{4.3} &x_1 f_{x_1 x_1}+x_2 f_{x_1 x_2}=0,
\\& x_1 f_{x_1 x_2}+x_2 f_{x_2 x_2}=0.\end{aligned}\end{equation}
From \e{4.3} we get
\begin{align} \label{4.4} \Delta f=f_{x_1x_1}+ f_{x_2x_2}=-f_{x_1 x_2}\left(\frac{x_1^2+x_2^2}{x_1 x_2}\right),\end{align}
Therefore, it follows from \e{2.8} and \e{4.4} that if the production surface is isotropic minimal in ${\bf I}^3$, then $f_{x_1x_2}=0$, which implies that $f(x_1,x_2)=p(x_1)+q(x_2)$ for some functions $p$ and $q$. Since $f$ is assumed to be linearly homogeneous, we conclude that $f$ is a perfect substitute.

The converse is trivial. 
\end{proof}

\begin{remark} Contrast to Theorem \ref{T:4.1}, Proposition \ref{P:4.1} is false if the homogeneous production function $f$ is nonlinear. Two simple examples are the degree 2 homogeneous production functions given by $f=x_1 x_2$ and $k=x^2- y^2$. It is easy to verify that the graph surfaces  of $f$ and $k$ are isotropic minimal  in ${\bf I}^3$, but  not perfect substitutes. 
\end{remark}

Theorem 4.2 states that there do not exist minimal Cobb-Douglas production hypersurfaces in $\mathbb E^{n+1}$. On the other hand, the next  result states that there do exist isotropic minimal Cobb-Douglas production models in $\I$.

 \begin{proposition} \label{P:4.2} An $n$-input Cobb-Douglas production hypersurface is isotropic minimal in $\I$ if and only if the production function is of the form $f=\gamma x_1\cdots x_n$ for some nonzero constant $\gamma$.
 \end{proposition}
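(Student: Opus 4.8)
The plan is to reduce isotropic minimality to harmonicity and then compute the Laplacian of the Cobb--Douglas function explicitly. By Remark 2.1 together with \e{2.8}, the graph hypersurface $\Phi(f)$ is isotropic minimal in $\I$ if and only if $\Delta f=0$. So the whole proposition amounts to determining when a Cobb--Douglas function \e{3.3} is harmonic.

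First I would compute the partial derivatives of $f=\gamma x_1^{\alpha_1}\cdots x_n^{\alpha_n}$. Since differentiating in $x_j$ only affects the factor $x_j^{\alpha_j}$, one obtains $f_{x_j}=(\alpha_j/x_j)f$ and, differentiating once more, $f_{x_jx_j}=\alpha_j(\alpha_j-1)x_j^{-2}f$. Summing over $j$ and using \e{1.2} yields
\begin{align*}\Delta f=f\sum_{j=1}^n\frac{\alpha_j(\alpha_j-1)}{x_j^2}.\end{align*}
Because $f$ is a positive production function, $\Delta f$ vanishes identically on the domain precisely when the displayed sum does.

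The key step --- and really the only one carrying content --- is to pass from the identical vanishing of $\sum_{j}\alpha_j(\alpha_j-1)x_j^{-2}$ to the vanishing of each coefficient. I would argue this from the linear independence of the functions $x_1^{-2},\ldots,x_n^{-2}$ on an open domain of ${\bf R}^n$: since the $j$-th term is the only summand depending on $x_j$, freezing all variables but $x_j$ and letting $x_j$ vary forces $\alpha_j(\alpha_j-1)=0$ for every $j$. Finally, the Cobb--Douglas exponents are nonzero by the definition \e{3.3}, so $\alpha_j(\alpha_j-1)=0$ gives $\alpha_j=1$ for each $j$, that is, $f=\gamma x_1\cdots x_n$.

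The converse is immediate: for $f=\gamma x_1\cdots x_n$ each pure second partial $f_{x_jx_j}$ vanishes, hence $\Delta f=0$ and $\Phi(f)$ is isotropic minimal by Remark 2.1. I do not anticipate any genuine obstacle here; the only point requiring care is the linear-independence argument that isolates the individual exponents, the remainder being a routine Hessian computation.
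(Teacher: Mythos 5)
Your proof is correct and follows exactly the route the paper intends: the paper's own proof is the one-line ``Follows from \e{2.8} and \e{3.3}'', i.e.\ reduce isotropic minimality to harmonicity and compute $\Delta f$ for the Cobb--Douglas form. You have simply supplied the details the paper omits --- the Hessian computation, the linear-independence argument forcing $\alpha_j(\alpha_j-1)=0$, and the use of $\alpha_j\neq 0$ --- all of which are sound.
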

 \begin{proof} Follows from \e{2.8} and \e{3.3}.
   \end{proof}

Analogous to Theorem \ref{T:4.3}, we have the following simple geometric characterization of $n$-input perfect substitute in term of isotropic minimality.

  \begin{proposition} \label{P:4.3} An $n$-input CES production hypersurface is isotropic minimal in $\I$ if and only if the production function is a perfect substitute.
  \end{proposition}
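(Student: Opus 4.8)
The plan is to convert isotropic minimality into the scalar condition $\Delta f=0$ by means of \e{2.8} and the Remark of Section 2, and then to read off from the harmonicity of the CES function \e{3.4} a generalized polynomial identity in $x_1,\dots,x_n$ whose coefficients I will try to force to vanish.

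First I would write \e{3.4} as $f=\gamma u^{d/\rho}$ with $u=\sum_{i=1}^n a_i^\rho x_i^\rho$ and compute the Laplacian by the chain and product rules; a direct calculation yields
\begin{align*}
\Delta f=\gamma d\,u^{\frac d\rho-2}\Big[(\rho-1)\,u\sum_{i=1}^n a_i^\rho x_i^{\rho-2}+(d-\rho)\sum_{i=1}^n a_i^{2\rho}x_i^{2\rho-2}\Big].
\end{align*}
Since $\gamma,d\neq 0$ and $u>0$ on the domain, the hypersurface is isotropic minimal exactly when the bracket vanishes. Substituting $u=\sum_j a_j^\rho x_j^\rho$ and separating the diagonal terms $j=i$ from the off-diagonal ones, the bracket reduces to
\begin{align*}
(d-1)\sum_{i=1}^n a_i^{2\rho}x_i^{2\rho-2}+(\rho-1)\sum_{i\neq j}a_i^\rho a_j^\rho x_i^{\rho-2}x_j^\rho=0.
\end{align*}

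The main step is then to treat the left-hand side as a generalized polynomial and invoke the linear independence of the power products $x^\alpha=\prod_i x_i^{\alpha_i}$ with distinct real exponent vectors $\alpha$. The pure terms carry the exponent vector $(2\rho-2)e_i$, whereas each mixed term carries $(\rho-2)e_i+\rho e_j$ with $i\neq j$. When these exponent vectors are pairwise distinct, every coefficient must vanish, forcing $(\rho-1)a_i^\rho a_j^\rho=0$ and $(d-1)a_i^{2\rho}=0$; since all $a_i\neq 0$, this gives $\rho=1$ and $d=1$, so that \e{3.4} collapses to $f=\gamma\sum_i a_i x_i$, a perfect substitute. The converse is immediate, since a perfect substitute has vanishing Hessian and hence $\Delta f=0$.

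The step I expect to be the main obstacle is precisely this coefficient-matching, because the monomials need not be pairwise distinct: the exponent vector $(\rho-2)e_i+\rho e_j$ of a mixed term loses its $i$-th index exactly when $\rho=2$, in which case it collides with the pure exponent vector $(2\rho-2)e_j=2e_j$. The degenerate case $\rho=2$ must therefore be handled separately; collecting the coefficient of each $x_k^2$ then gives the condition $\big(\sum_\ell a_\ell^2\big)+(d-2)a_k^2=0$ for all $k$. For $n=2$ this condition has no admissible solution, so the conclusion stands; but for $n\geq 3$ it is satisfied by taking all $a_i$ equal and $d=2-n$, which produces the harmonic, non-linear CES function $f=\gamma\big(\sum_i x_i^2\big)^{(2-n)/2}$. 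Reconciling this degenerate family with the stated equivalence — either by restricting to $n=2$ or $\rho\neq 2$, or by incorporating an extra hypothesis — is where the genuine content and the principal difficulty of the proposition reside.
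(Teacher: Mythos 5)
Your computation of $\Delta f$ is exactly the paper's: equations \e{4.6}--\e{4.7} are your bracket multiplied by $x_1^2\cdots x_n^2$, with the mixed terms grouped over unordered pairs. The difference lies in what happens next. The paper passes directly from \e{4.7} to the assertion that isotropic minimality forces $d=\rho=1$; that is, it performs your coefficient-matching step silently, tacitly assuming that the power products occurring there have pairwise distinct exponent vectors. You are right that this assumption fails precisely at $\rho=2$, and your analysis of that degenerate case is correct: for $n\geq 3$ the choice $a_1=\cdots=a_n=1$, $\rho=2$, $d=2-n$ satisfies the paper's definition \e{3.4} (all parameters nonzero) and yields
\begin{align*}
f=\gamma\Big(\sum_{i=1}^n x_i^2\Big)^{\!(2-n)/2},
\end{align*}
the Newtonian potential, which is harmonic away from the origin (in particular on the positive orthant); by \e{2.8} its graph is isotropic minimal in $\I$, yet $f$ is not a perfect substitute. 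So the obstacle you flag at the end is not a defect of your argument: it is a genuine counterexample to the ``only if'' direction of the proposition as literally stated, and the paper's own proof overlooks it. Your verification that no such degenerate solution exists for $n=2$ is also correct: there $(d-2)a_k^2+a_1^2+a_2^2=0$ for $k=1,2$ forces $a_1^2=a_2^2$ and then $d=0$, which is excluded.

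In summary, your argument proves the proposition for $n=2$, and for arbitrary $n$ under the extra hypothesis $\rho\neq 2$ (for instance under the economically standard restriction $\rho<1$, which makes the elasticity of substitution $1/(1-\rho)$ positive); without some such restriction the statement fails for every $n\geq 3$. The classification your method actually yields is: an $n$-input CES hypersurface is isotropic minimal in $\I$ if and only if either $d=\rho=1$ (so $f$ is a perfect substitute), or $\rho=2$, $a_1^2=\cdots=a_n^2$ and $d=2-n$. This is more careful, and more correct, than the paper's treatment.
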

 \begin{proof} Let $f$ is an $n$-input CES production function defined by
 \begin{align}\label{4.5} f(x_1,\ldots,x_n)=\gamma \left(\sum_{i=1}^{n}a_{i}^{\rho}x_{i}^{\rho}\right)^{\! \frac{d}{\rho}},\end{align}
where $a_{i},d,\gamma, \rho$ are nonzero constants. By applying a straight-forward computation we find 
\begin{equation}  \begin{aligned}\label{4.6} &f_{x_j}=\gamma d a^\rho_j x_j^{\rho-1}  \(\sum_{i=1}^{n}a_{i}^{\rho}x_{i}^{\rho}\)^{\! \frac{d}{\rho}-1},
  \\&f_{x_jx_j}=\gamma d a^\rho_j x_j^{\rho-2}  \(\sum_{i=1}^{n}a_{i}^{\rho}x_{i}^{\rho}\)^{\! \frac{d}{\rho}-2}\((\rho-1)\sum_{i=1}^{n}a_{i}^{\rho}x_{i}^{\rho}+(d-\rho)a_j^\rho x_j^\rho   \),
 \end{aligned}\end{equation} for $j=1,\ldots,n$.
After a direct computation we obtain from \e{4.6} that the Laplacian of $f$ satisfies
\begin{equation}  \begin{aligned}\label{4.7} \Delta f &=\frac{\gamma d \(\sum_{i=1}^{n}a_{i}^{\rho}x_{i}^{\rho}\)^{\! \frac{d}{\rho}-2}}{x_1^2\cdots x_n^2}\! \Big\{\!(d\! -\! 1)\big(a_1^{2\rho} x_1^{2\rho}x_2^2\cdots x_n^2 +\cdots +a_n^{2\rho} x_1^2\cdots x_{n-1}^2 x_n^{2\rho} \big)
\\&\hskip.3in  +(\rho-1)\big\{ (a_1 a_2 x_1 x_2)^\rho (x_1^2+x_2^2) x_3^2\cdots x_n^2 +\cdots 
\\&\hskip.5in +  x_1^2\cdots x_{n-2}^2(a_{n-1}a_n x_{n-1}x_n)^\rho (x_{n-1}^2 +x_n^2)
\big\}  \Big\},
  \end{aligned}\end{equation}
which implies that the graph hypersurface $\Phi(f)$ is isotropic minimal in $\I$ if and only if $d=\rho=1$, i.e., $f$ is a perfect substitute.
  \end{proof}

\section{Remarks on production models in $\I$ with null relative curvature}

It follows from \e{2.9} that the graph hypersurface $\Phi(f)$ of a production function $f$ has null relative curvature if and only if the production function is a solution to the homogeneous Monge-Amp\`ere equation. Hence, by \cite[Proposition 2.1]{c0}, the results given in the last section of \cite{c6} can be rephrased as the following.

\begin{theorem} \label{T:5.1}Let $f=F(h(x_{1},\ldots,x_{n}))$ be a homothetic production function such that $h$ is a homogeneous function with $\deg h\ne 1$. Then the graph hypersurface $\Phi(f)$ of $f$ in $\I$ has null relative curvature if and only if either 
\vskip.05in

 {\rm (i)}  $h$ satisfies the homogeneous Monge-Amp\`ere equation $\det(h_{ij})=0$ or 

\vskip.05in

{\rm (ii)} up to constants, $f=F\circ h$ is a linearly homogeneous function.
  \end{theorem}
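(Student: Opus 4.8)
The plan is to reduce the whole statement to one determinant identity for the Hessian of a composite function $f=F\circ h$, and then to read off the dichotomy from a clean factorization of the relative curvature $\det(D^2 f)$. First I would record the two consequences of homogeneity. Writing $x=(x_1,\dots,x_n)^T$ for the position vector and using that $h$ is homogeneous of degree $d$, Euler's theorem gives $\langle \nabla h, x\rangle = d\,h$, and differentiating this relation once more yields $D^2h\cdot x = (d-1)\nabla h$. These are the only places where the hypothesis $d\neq 1$ enters.

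Next I would compute the Hessian of the composite. By the chain rule,
\[
D^2 f = F'(h)\,D^2 h + F''(h)\,\nabla h\,(\nabla h)^T,
\]
so $D^2 f$ is a rank-one perturbation of $F'(h)D^2 h$. Applying the matrix determinant lemma in its adjugate form $\det(A+uv^T)=\det A + v^T\,\mathrm{adj}(A)\,u$ --- valid even when $A$ is singular, which is essential since $D^2 h$ may degenerate --- with $A=F'(h)D^2h$, $u=F''(h)\nabla h$, $v=\nabla h$, I obtain
\[
\det(D^2 f) = F'(h)^n\det(D^2 h) + F''(h)F'(h)^{n-1}(\nabla h)^T\,\mathrm{adj}(D^2 h)\,\nabla h.
\]
The key simplification is to evaluate $(\nabla h)^T\mathrm{adj}(D^2h)\nabla h$ using the two Euler relations. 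Since $d\neq 1$, the relation $D^2h\cdot x=(d-1)\nabla h$ gives $\mathrm{adj}(D^2h)\nabla h = \tfrac{1}{d-1}\mathrm{adj}(D^2h)D^2h\cdot x = \tfrac{1}{d-1}\det(D^2h)\,x$, and then $\langle\nabla h,x\rangle = d\,h$ yields $(\nabla h)^T\mathrm{adj}(D^2h)\nabla h = \tfrac{d}{d-1}h\det(D^2h)$. Substituting this back produces the factorization
\[
\det(D^2 f) = F'(h)^{n-1}\det(D^2 h)\Big(F'(h) + \tfrac{d}{d-1}\,h\,F''(h)\Big).
\]

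From here the theorem follows. Since $F$ is monotonically increasing, $F'\neq 0$, so by \eqref{2.9} the relative curvature $K_n=\det(D^2 f)$ vanishes if and only if $\det(D^2h)=0$ or $F'(h)+\tfrac{d}{d-1}hF''(h)=0$. The first alternative is exactly the homogeneous Monge--Amp\`ere equation of case (i). For the second, the vanishing of the remaining factor forces the ordinary differential equation $F'(t)+\tfrac{d}{d-1}t\,F''(t)=0$; integrating it gives $F(t)=At^{1/d}+B$ for constants $A,B$, so that $f=A\,g+B$ with $g:=h^{1/d}$. Because $h$ is homogeneous of degree $d$, the function $g$ is linearly homogeneous, and hence $f$ is linearly homogeneous up to the additive and multiplicative constants $B$ and $A$ --- this is case (ii). The converse implications are immediate: case (i) makes the displayed factorization vanish, while in case (ii) Euler's theorem applied to the degree-one function $g$ gives $D^2 g\cdot x=0$, so that $x\in\ker D^2 f$ and hence $\det(D^2 f)=0$ for $x\neq 0$ on the production domain.

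The step I expect to be most delicate is not the algebra but the logical structure of the forward direction: a priori $\det(D^2 f)$ could vanish through $\det(D^2 h)=0$ on one region and through the $F$-factor on another, so one must argue that if case (i) fails --- that is, $\det(D^2 h)\not\equiv 0$ --- then on the open set where $\det(D^2 h)\neq 0$ the differential equation for $F$ must hold identically in $t=h$, which pins down the global form of $F$ and thereby forces case (ii) throughout the domain.
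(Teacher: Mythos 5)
Your proof is correct, but it takes a genuinely different route from the paper, whose entire argument for Theorem~\ref{T:5.1} consists of the observation (via \eqref{2.9}) that null relative curvature of $\Phi(f)$ in $\I$ means exactly that $f$ solves the homogeneous Monge--Amp\`ere equation $\det(D^2f)=0$, followed by a citation of \cite[Proposition 2.1]{c0} and of the classification of homothetic solutions of that equation given in the last section of \cite{c6}. What you have done, in effect, is reprove the cited results: your factorization
\begin{equation*}
\det(D^2 f) \;=\; F'(h)^{n-1}\det(D^2 h)\Bigl(F'(h) + \tfrac{d}{d-1}\,h\,F''(h)\Bigr)
\end{equation*}
is precisely Chen's explicit Hessian-determinant formula for composites with a homogeneous inner function (the subject of \cite{c3}), which is the engine behind the classification in \cite{c6}. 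Your derivation of it is sound: the chain rule gives the rank-one perturbation, the adjugate form of the matrix determinant lemma is correctly chosen so as to remain valid when $D^2h$ is singular, the two Euler identities $\langle\nabla h,x\rangle=dh$ and $D^2h\cdot x=(d-1)\nabla h$ collapse the correction term, and the integration of $F'+\tfrac{d}{d-1}\,tF''=0$ to $F(t)=At^{1/d}+B$ yields case (ii), with both converses handled correctly (in particular $D^2g\cdot x=0$ for degree-one $g$). So your proposal buys self-containedness where the paper buys brevity. Two remarks. First, you need $F'\neq 0$; strictly, ``monotonically increasing'' only gives $F'\geq 0$, but $F'>0$ is the standard hypothesis for homothetic production functions and is implicitly assumed by the paper as well. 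Second, the subtlety you flag at the end---that $\det(D^2f)\equiv 0$ could a priori hold with $\det(D^2h)=0$ on one region and the $F$-factor vanishing on another, so that neither alternative holds globally---is genuine, and the paper's citation-style proof (and, in the same loose form, the source \cite{c6}) does not address it either; identifying it is a point in your favor rather than a gap relative to the paper.
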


For Cobb-Douglas and CES production models, we have the following.

\begin{theorem} \label{T:5.2} Let $h(x_{1},\ldots,x_{n})$ be a Cobb-Douglas production function. Then the graph hypersurface $\Phi(f)$ of the homothetic production function $f=F\circ h$ in $\I$ has null relative curvature if and only if both $F$ and $h$ are linear.  \end{theorem}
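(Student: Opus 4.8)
The plan is to compute the relative curvature $K_n=\det(D^2f)$ of the composite $f=F\circ h$ by treating its Hessian as a rank‑one perturbation, then to specialize everything to the Cobb–Douglas case via Euler's homogeneity relations, and finally to read off the condition on $F$ and on the degree $d$ of $h$. First I would record the Hessian of a composition: with $\nabla h$ the gradient and $D^2h$ the Hessian, differentiation gives $D^2f=F'(h)\,D^2h+F''(h)\,\nabla h\,(\nabla h)^{T}$, which is a rank‑one perturbation of $F'(h)D^2h$. Applying the adjugate identity $\det(A+uv^{T})=\det A+v^{T}\mathrm{adj}(A)\,u$ with $A=F'(h)D^2h$, $u=F''(h)\nabla h$, $v=\nabla h$ yields, using \e{2.9},
$$\det(D^2f)=(F'(h))^{n}\det(D^2h)+(F'(h))^{n-1}F''(h)\,Q,\qquad Q:=(\nabla h)^{T}\mathrm{adj}(D^2h)\,\nabla h.$$
Since $F$ is monotonically increasing, $F'(h)\neq0$, so the whole question reduces to understanding $\det(D^2h)$ and $Q$ for a Cobb–Douglas $h$.

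Second, for $h=\gamma x_1^{\alpha_1}\cdots x_n^{\alpha_n}$ of degree $d=\alpha_1+\cdots+\alpha_n$ from \e{3.3}, the identity $h_{x_ix_j}=\tfrac{h}{x_ix_j}(\alpha_i\alpha_j-\delta_{ij}\alpha_i)$ exhibits $D^2h$ as $h$ times a diagonal conjugate of the rank‑one‑plus‑diagonal matrix $\mathbf a\mathbf a^{T}-\mathrm{diag}(\mathbf a)$, where $\mathbf a=(\alpha_1,\dots,\alpha_n)^{T}$. A second use of the determinant identity gives $\det(D^2h)=(-1)^{n}(1-d)(\alpha_1\cdots\alpha_n)\,h^{n}/(x_1^2\cdots x_n^2)$, which vanishes exactly when $d=1$. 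For $Q$, Euler's theorem supplies $D^2h\cdot\mathbf x=(d-1)\nabla h$ with $\mathbf x=(x_1,\dots,x_n)^{T}$; when $d\neq1$ I substitute $\nabla h=\tfrac{1}{d-1}D^2h\,\mathbf x$, use $D^2h\,\mathrm{adj}(D^2h)=\det(D^2h)\,I$ and $\mathbf x^{T}D^2h\,\mathbf x=d(d-1)h$, and obtain $Q=\dfrac{d\,h}{d-1}\det(D^2h)$.

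Third, combining these for $d\neq1$ gives $\det(D^2f)=(F'(h))^{n-1}\det(D^2h)\big[\,F'(h)+\tfrac{d}{d-1}hF''(h)\,\big]$. Because $\det(D^2h)\neq0$ and $F'\neq0$, null relative curvature is equivalent to the ODE $F'(t)+\tfrac{d}{d-1}tF''(t)=0$, whose elementary solution is $F(t)=At^{1/d}+B$ with $A\neq0$. Then $f=A\gamma^{1/d}x_1^{\alpha_1/d}\cdots x_n^{\alpha_n/d}+B$ is, up to the additive constant $B$, a linearly homogeneous Cobb–Douglas function; that is, $f=F\circ h$ with $F$ affine (``linear'') and inner factor the degree‑one (``linear'', i.e. linearly homogeneous) Cobb–Douglas function. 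This is precisely the alternative of Theorem~\ref{T:5.1}: case (i), $\det(h_{ij})=0$, is impossible here since $d\neq1$ by the determinant just computed, so only case (ii) can occur — hence one may, if preferred, simply invoke Theorem~\ref{T:5.1} once this computation has excluded (i). The converse is the trivial check that if $F$ is affine and $h$ is degree one then $\det(D^2f)=A^{n}\det(D^2h)=0$.

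The remaining case $d=1$, which Theorem~\ref{T:5.1} explicitly excludes, is where I expect the main obstacle. Here $\det(D^2h)=0$, so the formula collapses to $\det(D^2f)=(F'(h))^{n-1}F''(h)\,Q$, and the crux is to show $Q\neq0$. Since $d=1$ the vector $\mathbf x$ spans $\ker D^2h$, and I would verify that $D^2h$ has rank exactly $n-1$ — equivalently that $\mathbf a\mathbf a^{T}-\mathrm{diag}(\mathbf a)$ has nonzero pseudo‑determinant, which for $\alpha_i\neq0$ is forced by its nonvanishing trace‑type invariant — so that $\mathrm{adj}(D^2h)=\lambda\,\mathbf x\mathbf x^{T}$ with $\lambda\neq0$; then $Q=\lambda(\mathbf x^{T}\nabla h)^{2}=\lambda(dh)^{2}=\lambda h^{2}\neq0$. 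Thus null relative curvature forces $F''=0$, i.e. $F$ affine, while $h$ is already linearly homogeneous, again giving ``$F$ and $h$ linear.'' Assembling the two cases proves the equivalence; the only conceptual care needed is to read ``$F$ and $h$ linear'' in the normalized representation in which $h$ carries the degree‑one homogeneity and $F$ is the affine outer map, consistent with the ``up to constants linearly homogeneous'' conclusion of Theorem~\ref{T:5.1}.
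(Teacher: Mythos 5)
Your argument is correct, but it takes a genuinely different route from the paper, which contains no computation for Theorem~\ref{T:5.2} at all: by \eqref{2.9}, null relative curvature means $\det(D^2f)=0$, and the paper then simply declares the theorem to be a rephrasing (via \cite[Proposition 2.1]{c0}) of the classification in the last section of \cite{c6}. Your proof is self-contained: the decomposition $D^2f=F'(h)\,D^2h+F''(h)\,\nabla h(\nabla h)^T$ combined with the adjugate identity is in essence the Hessian-determinant formula for composite functions on which \cite{c3,c6} rest, and your Euler-relation evaluation of $\det(D^2h)$ and of $Q=(\nabla h)^T\mathrm{adj}(D^2h)\,\nabla h$, followed by solving $F'(t)+\tfrac{d}{d-1}tF''(t)=0$, reconstructs inside the paper what the citation outsources. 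This buys two real improvements. First, Theorem~\ref{T:5.1} (the form in which the cited classification is recorded here) assumes $\deg h\neq1$, so citation alone leaves the case $d=1$ of Theorem~\ref{T:5.2} unaddressed; your rank argument --- $\ker\bigl(\mathbf a\mathbf a^T-\mathrm{diag}\,\mathbf a\bigr)$ is spanned by $(1,\dots,1)^T$, hence $\mathrm{adj}(D^2h)=\lambda\,\mathbf x\mathbf x^T$ with $\lambda\neq0$ and $Q=\lambda h^2\neq0$ --- supplies exactly that missing case, and it is right. Second, your computation exposes that the statement's phrase ``both $F$ and $h$ are linear'' cannot be read literally: for $d\neq1$ the solutions are $F(t)=At^{1/d}+B$, so, e.g., $F(t)=\sqrt{t}$ and $h=x_1x_2$ yield null relative curvature with neither function linear; the theorem is true only under your normalized reading --- $f$ is, up to an additive constant, linearly homogeneous, i.e., $f=\tilde F\circ\tilde h$ with $\tilde F$ affine and $\tilde h$ of degree one --- which is precisely alternative (ii) of Theorem~\ref{T:5.1}. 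Two small points to tighten in your write-up: replace the vague appeal to a ``nonvanishing trace-type invariant'' by the one-line kernel computation (if $(\mathbf a\mathbf a^T-\mathrm{diag}\,\mathbf a)v=0$, then $v=(\mathbf a^Tv)\,\mathrm{diag}(\mathbf a)^{-1}\mathbf a=(\mathbf a^Tv)(1,\dots,1)^T$, so the rank is exactly $n-1$), and state $F'\neq0$ as a standing hypothesis on the outer function, since monotonicity alone permits isolated zeros of $F'$.
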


\begin{theorem} \label{T:5.3} Let $h(x_{1},\ldots,x_{n})$ be a CES production function defined by \e{3.4}. Then the graph hypersurface of the homothetic production function $f=F\circ h$ in $\I$ has null relative curvature  if and only if either
\begin{enumerate}
\item $\rho=1$, or 

\item  $f$ has constant return to scale.\end{enumerate}
  \end{theorem}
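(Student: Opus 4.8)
By \e{2.9} the relative curvature of $\Phi(f)$ is $K_n=\det(D^2 f)$, so the task is to decide when $\det(D^2 f)\equiv 0$. The plan is to reduce the problem, according to the degree $d$ of the CES function $h$, to the vanishing of the Hessian determinant of $h$ alone, and then to invoke Theorem \ref{T:5.1}. The sufficiency is immediate and needs no CES structure: if $\rho=1$ then $h=\gamma(a_1x_1+\cdots+a_nx_n)^{d}$ is a function of the single linear form $v=a_1x_1+\cdots+a_nx_n$, so $f=F\circ h=G(v)$ for some $G$ and $D^2 f=G''(v)\,\mathbf a\mathbf a^{T}$ has rank at most one, whence $\det(D^2 f)=0$ for $n\ge 2$; and if $f$ has constant return to scale then $f$ is homogeneous of degree one, so Euler's theorem gives $(D^2 f)(x_1,\ldots,x_n)^{T}=0$ and again $\det(D^2 f)=0$.

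The substance is the converse, for which I would compute $\det(D^2 h)$ explicitly. Writing $u=\sum_i a_i^{\rho}x_i^{\rho}$ and $b_j=a_j^{\rho}x_j^{\rho}$, differentiation of \e{3.4} (as in \e{4.6}) exhibits $D^2 h$ as $\gamma d\,u^{d/\rho-2}$ times the matrix with diagonal entries $b_j[(\rho-1)u+(d-\rho)b_j]/x_j^2$ and off-diagonal entries $(d-\rho)b_jb_k/(x_jx_k)$. Conjugating by $\mathrm{diag}(x_1,\ldots,x_n)$ clears the denominators and turns this into $(\rho-1)u\,\mathrm{diag}(b_1,\ldots,b_n)+(d-\rho)\mathbf b\mathbf b^{T}$, a rank-one perturbation of a diagonal matrix. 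The matrix determinant lemma, together with the identity $\mathbf b^{T}\mathrm{diag}(b_j)^{-1}\mathbf b=\sum_j b_j=u$, then collapses everything to a single clean factor, and I expect to arrive at
\[
\det(D^2 h)=(\gamma d)^{n}(\rho-1)^{n-1}(d-1)\Big(\prod_j a_j\Big)^{\rho}u^{\,n(d/\rho-1)}\Big(\prod_j x_j\Big)^{\rho-2}.
\]
For $n\ge 2$ and positive inputs this vanishes identically if and only if $\rho=1$ or $d=1$.

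With this in hand I would split on the degree. If $d\ne 1$, Theorem \ref{T:5.1} says $\Phi(f)$ has null relative curvature precisely when $\det(D^2 h)=0$ or, up to constants, $f$ is linearly homogeneous; by the displayed formula the first alternative reduces (since $d\ne1$) to $\rho=1$, while the second is exactly constant return to scale, yielding ``(1) or (2)''. The borderline degree $d=1$ is not covered by Theorem \ref{T:5.1} and must be handled directly: here $h$ is already linearly homogeneous, so $\det(D^2 h)=0$, and for $\rho\ne1$ a direct computation of the kernel of the conjugated matrix shows that $D^2 h$ has rank exactly $n-1$, with kernel spanned by $(x_1,\ldots,x_n)$. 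Expanding
\[
\det(D^2 f)=\big(F'(h)\big)^{n-1}\Big[F'(h)\det(D^2 h)+F''(h)\,(\nabla h)^{T}\,{\rm adj}(D^2 h)\,\nabla h\Big]
\]
by the matrix determinant lemma, the first bracketed term drops out, while ${\rm adj}(D^2 h)$ is a nonzero rank-one matrix whose range is $\ker(D^2 h)$; Euler's identity $(\nabla h)^{T}(x_1,\ldots,x_n)^{T}=h\ne0$ then forces the second term to be nonzero. Since $F'>0$, this gives $\det(D^2 f)\equiv0$ if and only if $F''=0$, i.e.\ $f$ is (up to constants) linearly homogeneous, which is again case (2).

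I expect the main obstacle to be exactly this Hessian-determinant computation---recognizing, after the $\mathrm{diag}(x_j)$ conjugation, the diagonal-plus-rank-one structure so that the determinant factors cleanly as $(\rho-1)^{n-1}(d-1)$---together with the separate $d=1$ analysis, where Theorem \ref{T:5.1} no longer applies and one must control the adjugate term through the rank-$(n-1)$ structure and Euler's relation.
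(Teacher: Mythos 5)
Your proposal is correct, and it is worth noting that it supplies an argument the paper itself never writes down: in the paper, Theorem \ref{T:5.3} is obtained purely by translation --- via \e{2.9}, null relative curvature of $\Phi(f)$ means $\det(D^2f)=0$, and the classification of homothetic functions solving the homogeneous Monge-Amp\`ere equation with CES inner function is then quoted from \cite{c6} (using also \cite[Proposition 2.1]{c0}). Your blind reconstruction follows what is essentially the route of the cited work: your explicit formula
\[
\det(D^2 h)=(\gamma d)^{n}(\rho-1)^{n-1}(d-1)\Big(\textstyle\prod_j a_j\Big)^{\rho}u^{\,n(d/\rho-1)}\Big(\textstyle\prod_j x_j\Big)^{\rho-2}
\]
is correct (I verified the diagonal-plus-rank-one reduction and the matrix determinant lemma step against \e{4.6}), and your expansion of $\det(D^2(F\circ h))$ in terms of $\det(D^2h)$ and $(\nabla h)^{T}\,{\rm adj}(D^2 h)\,\nabla h$ is exactly Chen's Hessian-determinant formula for composite functions from \cite{c3}, which is the engine behind \cite{c6}. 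Two things in your write-up deserve emphasis. First, you correctly spotted that Theorem \ref{T:5.1} carries the hypothesis $\deg h\ne 1$, so the case $d=1$ is \emph{not} covered by it and must be argued separately; your adjugate argument (rank of $D^2h$ equal to $n-1$ with kernel spanned by the position vector, so ${\rm adj}(D^2h)=\lambda\, x x^{T}$ with $\lambda\ne 0$, then Euler's identity forcing $F''\equiv 0$) handles this cleanly --- this is precisely the kind of gap that the paper's blanket citation conceals. Second, two standing assumptions are silently used and should be stated: $n\ge 2$, and positivity of the inputs and of the $a_j^{\rho}x_j^{\rho}$ (needed both for the ``vanishes identically iff $\rho=1$ or $d=1$'' conclusion and for the strict rank-$(n-1)$ claim via the Cauchy--Schwarz argument); these are the usual economic hypotheses, so they are harmless, but your proof genuinely depends on them.
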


For 2-input homothetic production functions, we have the following.

\begin{theorem} \label{T:5.4} Let $f(x,y)=F(h(x,y))$ be a homothetic production function. Then 
 the graph surface of  $f$ has null relative curvature if and only if either
 \begin{enumerate}

\item  $f(x,y)$ is  linearly homogeneous, or
 
\item the inner function $h(x,y)$ is a perfect substitute.
\end{enumerate} 
  \end{theorem}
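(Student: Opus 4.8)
The plan is to compute the relative curvature $K_2(p)=\det(D^2 f)=f_{xx}f_{yy}-f_{xy}^2$ directly for $f=F\circ h$ and to factor it so that the two alternatives in the statement surface as the vanishing of two separate factors. Writing $h$ for a homogeneous inner function of degree $d$, the chain rule gives $f_x=F'h_x$, $f_{xx}=F''h_x^2+F'h_{xx}$, and similarly for the remaining second derivatives, so a short expansion yields
\begin{align*}\det(D^2 f)=F'F''\,Q+(F')^2\det(D^2 h),\qquad Q:=h_x^2 h_{yy}-2h_x h_y h_{xy}+h_y^2 h_{xx}.\end{align*}
First I would record this identity; it is the only place where the outer function $F$ enters, and it already isolates the role of $F$ through the coefficients $F'F''$ and $(F')^2$.

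The key step is to exploit the homogeneity of $h$. As in \e{4.2}--\e{4.3}, Euler's theorem gives $xh_x+yh_y=dh$, and differentiating produces $xh_{xx}+yh_{xy}=(d-1)h_x$ together with $xh_{xy}+yh_{yy}=(d-1)h_y$. Solving these for $h_{xx}$ and $h_{yy}$ and substituting into $Q$ and into $\det(D^2 h)$, I expect both quantities to collapse onto a common factor
\begin{align*}P:=(d-1)h_x h_y-d\,h\,h_{xy},\qquad Q=\frac{d\,h}{xy}\,P,\qquad \det(D^2 h)=\frac{d-1}{xy}\,P.\end{align*}
Feeding this back into the first identity gives the clean factorisation
\begin{align*}\det(D^2 f)=\frac{F'(h)}{xy}\,P\,\big[d\,h\,F''(h)+(d-1)F'(h)\big].\end{align*}
Since $F$ is monotonically increasing, $F'\neq0$, and on the positive domain $xy\neq0$, so null relative curvature is equivalent to $P\cdot\big[d\,h\,F''(h)+(d-1)F'(h)\big]\equiv0$.

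It then remains to read off the two factors. The vanishing of $\Psi(s):=d\,s\,F''(s)+(d-1)F'(s)$ is an elementary linear ODE in $F'$ whose solution is $F(s)=c\,s^{1/d}$ up to an additive constant (which is invisible to $D^2 f$); this is exactly the case in which $f=c\,h^{1/d}$ is linearly homogeneous, giving alternative (1). The vanishing of $P$ is, for $d\neq1$, equivalent to the homogeneous Monge-Amp\`ere equation $\det(D^2 h)=0$; writing $h=x^d g(y/x)$ turns it into $d\,gg''-(d-1)(g')^2=0$, whose solutions are $g(v)=A(v-c)^d$, that is $h=A(y-cx)^d$, a function with straight-line isoquants — so, after absorbing the power into the outer function via $f=\tilde F(-cx+y)$ with $\tilde F(t)=F(At^d)$, $f$ is homothetic over the perfect substitute $-cx+y$. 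For $d=1$ one has directly $P=-h\,h_{xy}$, whence $P\equiv0$ means $h_{xy}\equiv0$ and $h$ is literally a perfect substitute. This is alternative (2).

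The main obstacle is the passage from $P\cdot\Psi(h)\equiv0$ to the genuine dichotomy $P\equiv0$ \emph{or} $\Psi\equiv0$, since a priori each factor could vanish on a different part of $D$. Here I would use homogeneity decisively: $P$ equals $x^{2d-2}$ times a function of $v=y/x$ alone, while a level strip $\{\,s_0-\varepsilon<h<s_0+\varepsilon\,\}$ meets every ray from the origin and therefore realises every value of $v$. Hence if $\Psi(h)$ fails to vanish at one point, it is nonzero on a full such strip, forcing $P=0$ there and thus, by the $v$-dependence, on all of $D$. The second delicate point is conceptual rather than computational: for $d\neq1$ the condition $P\equiv0$ produces $h=A(y-cx)^d$ rather than a literally linear $h$, so alternative (2) must be read through the non-unique homothetic representation in which the inner function is the perfect substitute $-cx+y$. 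I would make this re-representation explicit so that the economic reading — constant marginal rate of substitution and linear isoquants — matches the formal definition in \e{3.5}.
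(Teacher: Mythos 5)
Your proposal is correct, but it follows a genuinely different route from the paper, which in fact contains no computation for Theorem \ref{T:5.4} at all: Section 5 simply observes that by \eqref{2.9} null relative curvature of $\Phi(f)$ in ${\bf I}^3$ is the same as $\det(D^2f)=0$, i.e.\ that $f$ solves the homogeneous Monge-Amp\`ere equation, and then quotes the classification of homothetic solutions from the last section of \cite{c6} (via \cite[Proposition 2.1]{c0}). What you have done is reconstruct, from scratch, the content of that citation: your decomposition $\det(D^2f)=F'F''\,Q+(F')^2\det(D^2h)$ is the two-variable case of the composite-Hessian formula of \cite{c3}, and your Euler-relation factorization $\det(D^2 f)=\frac{F'}{xy}P\,[\,d\,h\,F''+(d-1)F'\,]$ together with the two ODE analyses recovers exactly the dichotomy proved in \cite{c6}. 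The trade-off is clear: the paper's argument is a two-line rephrasing that leans entirely on prior work, while yours is self-contained and, usefully, makes explicit two points that the bare statement glosses over --- first, the genuine logical dichotomy (your strip-and-ray argument showing that $P\cdot\Psi(h)\equiv 0$ forces one factor to vanish identically, which exploits that $P$ is $x^{2d-2}$ times a function of $y/x$ and that every ray meets every level strip of $h$); second, the need to read the conclusions up to normalization (the additive constant in alternative (1), which is what ``up to constants'' means in Theorem \ref{T:5.1}, and the re-representation $f=\tilde F(ax+by)$ with $\tilde F(t)=F(At^d)$ when $d\ne 1$, since then $h$ itself is only a power of a perfect substitute). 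Both caveats are real features of the statement as inherited from \cite{c6}, not defects of your argument; the only residual degenerate case in both treatments is $h=Ax^d$ (a coefficient of the linear form vanishing), which sits outside the strict definition \eqref{3.5} of a perfect substitute.
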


\section{Remark on flat production models in $\I$}

Homogeneous production functions with flat graph hypersurfaces in ${\mathbb E}^{n+1}$ were classified by the first author and G. E. V\^{\i}lcu  as follows.

\begin{theorem} \label{T:6.1} \cite{CV} Let $f$ be a homogeneous production function with $\deg f=d$.  Then the graph hypersurface of $f$ in $\mathbb E^{n+1}$ is flat if and only if either 
\begin{enumerate}
\item  $f$ has constant return to scale, or

\item $f$ is of the form
\begin{align}\label{f}f=\left(c_1x_{1}+c_2x_{2}+\cdots+c_{n} x_{n}\right)^d,\;\; d\ne 1,\end{align} for some real constants $c_{1},\ldots,c_{n}$.
\end{enumerate}
 \end{theorem}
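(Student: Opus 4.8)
The plan is to reduce intrinsic flatness of the Euclidean graph $\Phi(f)\subset\mathbb E^{n+1}$ to a pointwise condition on the Hessian $D^2f$, and then to feed in homogeneity through Euler's theorem. For a graph hypersurface the second fundamental form is $\mathrm{II}_{ij}=f_{ij}/\sqrt{1+|\nabla f|^2}$, so the Gauss equation reads $R_{ijkl}=\mathrm{II}_{ik}\mathrm{II}_{jl}-\mathrm{II}_{il}\mathrm{II}_{jk}$. Hence $\Phi(f)$ is flat exactly when every $2\times2$ minor of $\mathrm{II}$, equivalently of $D^2f$, vanishes identically; that is, when $\mathrm{rank}\,(D^2f)\le1$ everywhere (at most one nonzero principal curvature). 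Recording this equivalence is the first step.

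Next I would differentiate the Euler identity. Writing $\mathbf x=(x_1,\dots,x_n)$, the $d$-homogeneity of $f$ gives $\sum_i x_i f_i=d f$, and differentiating in $x_j$ yields $\sum_i x_i f_{ij}=(d-1)f_j$, i.e. $(D^2f)\,\mathbf x=(d-1)\nabla f$. This single relation separates the two alternatives. If $d=1$ (constant return to scale), then $(D^2f)\,\mathbf x=0$, so the position vector lies in $\ker(D^2f)$ and $\det(D^2f)=0$; since the graph is moreover invariant under the scalings $X\mapsto tX$, it is the cone over its profile, hence a developable hypersurface, which is flat. This is alternative (1).

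If $d\ne1$, then $\nabla f$ is a nonzero vector lying in the image of $D^2f$, so $\mathrm{rank}\,(D^2f)\le1$ forces the rank to be exactly $1$ with image $\mathbb R\,\nabla f$; hence $f_{ij}=\rho\,f_if_j$ for a scalar function $\rho$. Reading this as $d(\log f_i)=\rho\,df$ shows that $\rho$ depends only on $f$ and that each $f_i$ equals a constant $c_i$ times one common function of $f$, so $\nabla f$ has the fixed direction $c=(c_1,\dots,c_n)$. Therefore $f$ depends only on the linear form $L=c_1x_1+\cdots+c_nx_n$, and $d$-homogeneity upgrades the resulting profile to a pure power, giving $f=(c_1x_1+\cdots+c_nx_n)^d$, which is alternative (2). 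The converse directions are immediate: for $f=L^d$ one has $D^2f=d(d-1)L^{d-2}\,cc^{\top}$ of rank $\le1$, and the $d=1$ graph is a flat cone as above, so in both cases $\Phi(f)$ is flat.

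The step I expect to be the main obstacle is the rigidity in the case $d\ne1$: upgrading the pointwise rank-one identity $f_{ij}=\rho\,f_if_j$ to the global statement that $\nabla f$ has constant direction. This hinges on using the symmetry $f_{ijk}=f_{ikj}$ (equivalently, closedness of the exact forms $d\log f_i$) to force $\rho$ to be constant along the level sets of $f$, after which $\nabla f$ integrates to a single linear form; the homogeneity is then exactly what collapses the remaining one-variable profile to the power $L^d$.
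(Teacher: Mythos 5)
The paper itself offers no proof of this statement: it is quoted from \cite{CV}, and the only internal trace of the argument is the remark in the proof of Theorem \ref{T:6.2} that one passes to the vanishing of the minors $f_{x_jx_j}f_{x_kx_k}-f_{x_jx_k}^2$ and then argues as in \cite{CV}. Measured against that, the core of your proposal is on the right track: your Step 1 (flat $\Leftrightarrow$ all $2\times 2$ minors of $D^2f$ vanish $\Leftrightarrow \operatorname{rank} D^2f\le 1$, via the Gauss equation) is correct, and your case $d\ne 1$ is sound and essentially the intended argument — Euler's identity $(D^2f)\mathbf{x}=(d-1)\nabla f$ puts $\nabla f$ in the image of $D^2f$, rank one then gives $f_{ij}=\rho f_if_j$, integrating $\nabla\log f_i=\rho\,\nabla f$ makes the ratios $f_i/f_k$ constant, so $f=g(c_1x_1+\cdots+c_nx_n)$, and homogeneity collapses $g$ to a power. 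The converse for alternative (2) is also fine, since $D^2(L^d)=d(d-1)L^{d-2}cc^{\top}$ has rank at most one.

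The genuine gap is your case $d=1$. There you establish only $(D^2f)\mathbf{x}=0$, i.e.\ $\det D^2f=0$, and then assert that the graph, being a cone, is ``developable, hence flat.'' For $n\ge 3$ that implication is false: cones in $\mathbb E^{n+1}$ have vanishing Gauss--Kronecker curvature but are in general not intrinsically flat, and the assertion directly contradicts your own Step 1, which demands $\operatorname{rank} D^2f\le 1$, not merely a nontrivial kernel. A concrete counterexample is the linearly homogeneous Cobb--Douglas function $f=(x_1x_2x_3)^{1/3}$ on the positive octant: here $f_{ii}=-2f/(9x_i^2)$ and $f_{12}=f/(9x_1x_2)$, so $f_{11}f_{22}-f_{12}^2=f^2/(27x_1^2x_2^2)\ne 0$, hence $\operatorname{rank} D^2f=2$ and by the Gauss equation the graph has a nonzero sectional curvature, even though $f$ has constant return to scale. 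The implication ``$d=1\Rightarrow$ flat'' is valid only for $n=2$, where Euler's identity $(D^2f)\mathbf{x}=0$ with $\mathbf{x}\ne 0$ already forces $\operatorname{rank} D^2f\le 1$ on a $2\times2$ Hessian. So the step you flagged as the likely obstacle (the rigidity argument for $d\ne 1$) is in fact the solid part of your proof, while the step you treated as immediate is the one that fails; indeed, applying your own rank criterion to the $d=1$ case would have revealed that this direction of the quoted statement cannot be proved as stated for $n\ge 3$ when ``flat'' means vanishing curvature tensor (it is provable only for $n=2$, or with ``flat'' weakened to vanishing relative/Gauss--Kronecker curvature, which is the condition your cone argument actually verifies).
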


Finally, we discuss isotropic flat production models in $\I$.
First, we remark that for an $n$-input production function $f$, it follows from \e{2.6} that all curves on $\Phi(f)$ passing through a given point  with a given tangent have the same normal curvature. 

For a 2-dimensional subspace $\pi$ of the tangent space $T_p(\Phi(f))$ of the graph hypersurface $\Phi(f)$ at a point $p$, we define the {\it isotropic sectional curvature} $K_i(\pi)$ to be the product of the maximum and minimum normal curvatures with respect to the directions in $\pi$. A graph hypersurface $\Phi(f)$ in $\I$ is said to be {\it isotropic flat} if its isotropic sectional curvatures vanish identically.

Analogous to Theorem \ref{T:6.1}  we have the following.

\begin{theorem} \label{T:6.2} Let $f$ be a homogeneous production function.  Then the graph hypersurface of $f$ in $\I$ is isotropic flat if and only if either 
\begin{enumerate}
\item $f$ has constant return to scale, or

\item $\deg f\ne 1$ and $f$ is a power of a perfect substitute.
\end{enumerate}
 \end{theorem}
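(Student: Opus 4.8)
The plan is to reduce the assertion to Theorem~\ref{T:6.1} by showing that, for a graph hypersurface $\Phi(f)$, isotropic flatness in $\I$ and Riemannian flatness in $\mathbb E^{n+1}$ are both governed by the single algebraic condition that the Hessian $D^2f(p)$ have rank at most one at every point $p$.

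First I would translate isotropic flatness into this rank condition. By \e{2.6} the normal curvature of a tangent direction whose top view is a unit vector $t$ equals $t^T(D^2f(p))t$, and, as recalled after \e{2.6}, the principal curvatures $\kappa_1,\dots,\kappa_n$ are the eigenvalues of the symmetric matrix $D^2f(p)$ with mutually orthonormal top views $t_1,\dots,t_n$. Fix a $2$-plane $\pi\subset T_p(\Phi(f))$ and let $V\subset\mathbb E^n$ be its top view. As a unit direction runs over $\pi$, its top view runs over the unit circle of $V$, so the maximal and minimal normal curvatures attained on $\pi$ are exactly the two eigenvalues of the restricted quadratic form $D^2f(p)|_V$, whence $K_i(\pi)=\det(D^2f(p)|_V)$. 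Thus $\Phi(f)$ is isotropic flat precisely when $\det(D^2f(p)|_V)=0$ for every $2$-plane $V$ and every $p$. Invoking the spectral decomposition of the symmetric matrix $D^2f(p)$, this determinant vanishes for \emph{all} $V$ if and only if $D^2f(p)$ has at most one nonzero eigenvalue, i.e.\ $\operatorname{rank}D^2f(p)\le1$.

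Next I would note that the Euclidean graph carries the identical characterization: its second fundamental form is the positive multiple $D^2f/\sqrt{1+\|\nabla f\|^2}$ of the Hessian, so by the Gauss equation its Riemann tensor vanishes exactly when the second fundamental form, hence $D^2f(p)$, has rank at most one. Consequently $\Phi(f)$ is isotropic flat in $\I$ if and only if it is flat in $\mathbb E^{n+1}$, and the classification then follows at once from Theorem~\ref{T:6.1}: its alternative (1) is constant return to scale, while alternative (2), $f=(c_1x_1+\cdots+c_nx_n)^d$ with $d\ne1$, says exactly that $\deg f\ne1$ and that $f$ is a power of the perfect substitute $c_1x_1+\cdots+c_nx_n$.

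The step I expect to demand the most care is the equivalence between $\det(D^2f(p)|_V)=0$ for every $2$-plane $V$ and the rank-one condition: while a rank-one Hessian trivially restricts to a degenerate form on every $2$-plane, the converse requires checking that a Hessian of rank $\ge2$ must restrict nondegenerately to the span of two top views $t_i,t_j$ whose principal curvatures $\kappa_i,\kappa_j$ are nonzero, which is where the symmetry of $D^2f$ (the existence of an orthonormal eigenbasis) is used decisively. Once this reduction is secured, both directions of the stated equivalence are immediate from Theorem~\ref{T:6.1}.
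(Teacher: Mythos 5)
Your proposal is correct, and it takes a genuinely different route from the paper's proof. The paper evaluates the isotropic sectional curvature only on the coordinate planes $\pi_{jk}={\rm Span}\{\partial/\partial x_j,\partial/\partial x_k\}$, obtains the principal-minor system $f_{x_jx_j}f_{x_kx_k}-f_{x_jx_k}^2=0$, and then concludes by re-running ``the same argument as in the proof of Theorem 1.1 of \cite{CV}'' on that system; the Euclidean result is used at the level of its \emph{proof}, not its statement. You instead test \emph{all} tangent $2$-planes, turn isotropic flatness into the pointwise condition $\operatorname{rank}D^2f(p)\le 1$, identify that condition (via the Gauss equation) with flatness of the graph in $\mathbb E^{n+1}$, and then quote Theorem~\ref{T:6.1} as a black box. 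Besides being cleaner --- it isolates the reusable fact that for graph hypersurfaces isotropic flatness in $\I$ and Euclidean flatness coincide --- your version supplies strength that the paper's reduction actually lacks: vanishing of the principal minors alone does \emph{not} force $\operatorname{rank}D^2f\le1$. For instance, the degree-$2$ homogeneous function $f=\tfrac12(x_1^2+x_2^2+x_3^2)+x_1x_2-x_1x_3+x_2x_3$ has constant Hessian whose three principal $2\times2$ minors all vanish while $\det D^2f=-4\ne0$; it satisfies the paper's displayed equations yet is neither isotropic flat nor of type (1) or (2). So the non-coordinate planes you consider are genuinely needed to reach the rank condition, and your careful treatment of the converse direction --- using the spectral theorem to produce a $2$-plane on which the restricted Hessian is nondegenerate --- is exactly the step that closes this gap; the only thing the paper's terser route buys is brevity. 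One small point worth making explicit in your write-up: since every tangent vector of $\Phi(f)$ has the form $t+\left<t,\nabla f\right>i$, tangent spaces contain no isotropic vectors, so the top view of a tangent $2$-plane $\pi$ is an honest $2$-plane $V$ and your identification $K_i(\pi)=\det\big(D^2f(p)|_V\big)$ is legitimate.
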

\begin{proof} Let $f(x_1,\ldots,x_n)$ be a homogeneous production function. Let us consider the graph hypersurface $\Phi(f)$ of $f$ and the plane section $\pi_{jk}={\rm Span}\{\frac{\partial}{\partial x_j},\frac{\partial}{\partial x_k}\}$ with $1\leq j\ne k\leq n,$ at a point on $\Phi(f)$  in $\I$. Then the isotropic sectional curvature $K_i(\pi_{jk})$ is obtained by taking $D^2f$ only from rows and columns with indices $j,k$. Therefore, if the graph hypersurface $\Phi(f)$ is isotropic flat in $\I$, then we have $$f_{x_j x_j}f_{x_k x_k}-f_{x_j x_k}^2=0$$
for $1\leq j\ne k\ne n$.
Consequently, we may apply the same argument as in the proof of Theorem 1.1 of \cite{CV} to conclude the theorem.
\end{proof}

\section{Concluding remarks}

By imposing various curvature conditions on graph hypersurfaces, we observe in the last three sections several differences and similarities between graph hypersurfaces in $\mathbb E^{n+1}$ and graph hypersurfaces in $\I$.
In the following, we discuss some differences between graph hypersurfaces in $\mathbb E^{n+1}$ and graph hypersurfaces in $\I$ from the viewpoint of finite type theory in the sense of the first author \cite{c1984,book,c7}.

Analogous to the theory of finite type submanifolds in $\mathbb E^{n+1}$ (cf. \cite{book,c7}), we call a graph hypersurface $\Phi(f)$ in $\I$ to be of {\it finite type} if all coordinate functions of $\Phi(f)$ in $\I$ are finite sums of eigenfunctions of the Laplacian $\Delta$ on $\mathbb E^n$. It is clear from \e{2.1} that a graph hypersurface $\Phi(f)$ in $\I$ is of finite type if and only if the function $f$ is of finite type. Since there exist many finite type functions, there are abundant examples of finite type graph hypersurfaces in $\I$, a phenomenon contrast to finite type hypersurfaces in Euclidean spaces (cf. \cite{c7}).

Similar to Euclidean submanifolds,  we call  a graph hypersurface $\Phi(f)$ in $\I$ {\it isotropically biharmonic} if the position function ${\bf x}$ of $\Phi(f)$ in $\I$ is biharmonic, i.e., it satisfies the biharmonic equation: \begin{align}\label{7.1} \Delta^2 {\bf x}=0.\end{align}

It follows from \e{2.1} and \e{7.1} that a graph hypersurface $\Phi(f)$ in $\I$ is  isotropically  biharmonic if and only if the function $f$ is biharmonic. Obviously, there exist many biharmonic functions which are not harmonic. Consequently, there are many graph hypersurfaces in $\I$ which are  isotropically  biharmonic, but not harmonic. This phenomenon is quite different from Euclidean biharmonic submanifolds. In fact, the first author conjectured in 1981 that minimal submanifolds are the only biharmonic submanifolds in Euclidean spaces. This biharmonic conjecture remains unsettled after more than three decades (see \cite{c5,c7} for  the most recent development on biharmonic conjecture).

\end{document}